\theoremstyle{plain}
\newtheorem{theorem}{Theorem}[section]
\newtheorem{lemma}[theorem]{Lemma}
\newtheorem{proposition}[theorem]{Proposition}
\theoremstyle{remark}
\theoremstyle{definition}
\newtheorem{remark}[theorem]{Remark}
\def\RR{\mathbb{R}}
\def\EE{\mathbb{E}}
\def\PP{\mathbb{P}}
\def\NN{\mathbb{N}}
\def\cee{{\mathcal E}}
\def\cdd{{\mathcal D}}
\def\1{\bm 1}
\def\lt{\left}
\def\rt{\right}
\def\e{\mathrm{e}}
\let\Section=\section
\def\section{\setcounter{equation}{0}\Section}
\title[Stable Dawson-Watanabe processes in supercritical regimes] {Long-time asymptotic of stable Dawson-Watanabe processes in supercritical regimes}
\date{May 30, 2017}
\author[K. L\^e]{Khoa L\^e}
\email{knle@ualberta.ca}
\address{Department of Mathematical and Statistical Sciences\\
University of Alberta\\
 Edmonton, AB  T6G 2G1 Canada
}
\thanks{The author thanks PIMS for its support through the Postdoctoral Training Centre in Stochastics.}
\subjclass[2010]{Primary 60J68, 60F15; Secondary 60G52}
\keywords{Dawson-Watanabe process; $\alpha$-stable process}
\begin{document}
\begin{abstract} Let $W=(W_t)_{t\ge0}$ be a supercritical $\alpha$-stable Dawson-Watanabe process (with $\alpha\in(0,2]$) and $f$ be a test function in the domain of $-(-\Delta)^{\frac \alpha2}$ satisfying some integrability condition. Assuming the initial measure $W_0$ has a finite positive moment, we determine the long-time asymptotic of all orders of $W_t(f)$. In particular, it is shown that the local behavior of $W_t$ in long-time is completely determined by the asymptotic of the total mass $W_t(1)$, a global characteristic.
\end{abstract}
\maketitle
\section{Introduction}
	Let $W=(W_t,t\ge0)$ be a Dawson-Watanabe process starting from a finite measure $m$ with motion generator $-(-\Delta)^{\frac \alpha2} $ on $\RR^d$ ($\alpha\in(0,2]$) and linear growth $\beta>0$. More precisely, $W$ is a measure-valued Markov process such that the process
	\begin{equation}\label{def:M}
		t\to M_t(f) :=W_{t}\left(f\right)-m \left(f\right)-\int _{0}^{t}W_{s}\left((-(-\Delta)^{\frac \alpha2}+\beta)f\right){d}s
	\end{equation}
	is a martingale with quadratic variation 
	$ \langle M^W\left(f\right)\rangle_{t}=\int _{0}^{t}W_{s}\left( f^{2}\right){d}s$ 
	for all $ f\in C^{2}_b\left(\mathbb R^{d}\right)$.
	The law of $W$ is denoted by $\PP_m$.
	Throughout the paper, we assume that the initial measure $m$ has a finite positive moment, that is
	\begin{equation}\label{con:m}
		\int_{\RR^d}|x|^am(dx)<\infty \quad\textrm{for some}\quad a>0\,.
	\end{equation}
	The case when $\beta>0$ is known as supercritical branching regime. 
	The cases $\beta<0$ and $\beta=0$ are known respectively as subcritical and critical branching regimes which, however, are not considered in the current article. For a fixed test function $f$ with sufficient regularity and integrability, we investigate the long-time asymptotic of $W_t(f)$ in supercritical branching regimes. 
	To state the main result precisely, we prepare some notation. For each multi-index $k=(k_1,\dots,k_d)\in\NN^d$ and $x=(x_1,x_2,\dots,x_d)\in\RR^d$, we denote
	\begin{gather*}
		|k|=k_1+k_2+\cdots+k_d\,,\quad k!=k_1!k_2!\cdots k_d!\,,
		\quad x^k=x_1^{k_1}x_2^{k_2}\cdots x_d^{k_d}\,,
	\end{gather*}
	and define the constant $\vartheta^k_{d,\alpha}$ and the $\sigma$-finite signed measure $\lambda_d^k$ on $\RR^d$ respectively by
	\begin{equation}\label{def:constTheta}
		\vartheta^k_{d,\alpha}=\frac1{(2 \pi)^d}\int_{\RR^d}e^{-|\theta|^\alpha}\theta^k d \theta
		\quad\textrm{and}\quad 
		\lambda_d^k(dy)=\frac1{k!}y^kdy \,.
	\end{equation}
	Obviously $\lambda^0_d$ is the Lebesgue measure $\lambda_d$. In supercritical regimes ($\beta>0$), it is well-known that the limit $\lim_{t\to\infty}e^{-\beta t}W_t(1)$ exists almost surely and is a well-defined random variable. We denote $\widetilde W_\infty(1)=\lim_{t\to\infty}e^{-\beta t}W_t(1)$.
 
	\begin{theorem} \label{thm:stbSLLn}	
		Let $N$ be a non-negative integer and $f$ be a function in $\cdd(-(-\Delta)^{\frac \alpha2} )$ satisfying 
		\begin{equation}\label{con:fxN}
			\int_{\RR^d}|f(x)||x|^{N}dx<\infty\,.
		\end{equation} 
		Then, with $\PP_m$-probability one,
		\begin{equation}\label{stb1}
			\lim_{t\to\infty}t^{\frac {N+d} \alpha}\lt|W_t(f)-\widetilde W_\infty(1)\sum_{\substack{k\in\NN^d:|k|\le N\\|k|\textrm{ is even}}} {(-1)^{\frac{|k|}2}t^{-\frac{d+|k|}\alpha}}\vartheta^k_{d,\alpha}\lambda^k_d(f) \rt|=0\,.
		\end{equation}
		Written another way, we have 
		\begin{equation}\label{stb2}
		 	t^{\frac d \alpha} W_t(f)=\widetilde W_\infty(1)\sum_{\substack{k\in\NN^d:|k|\le N\\|k|\textrm{ is even}}} (-1)^{\frac{|k|}2}t^{-\frac{|k|}\alpha} \vartheta^k_{d,\alpha} \lambda^k_d(f)+o(t^{-\frac{N}\alpha}) \,,
		 \end{equation}
		where $\lim_{t\to\infty}t^{\frac N \alpha} o(t^{-\frac{N}\alpha})=0$ almost surely.
	\end{theorem}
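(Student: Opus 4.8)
The plan is to combine a sharp asymptotic expansion of the $\alpha$-stable semigroup at a point with a conditioning‑plus‑martingale argument that turns the resulting first–moment identity into an almost sure statement. It is convenient to work with the normalized process $V_t:=e^{-\beta t}W_t$, for which $V_t(1)$ is a nonnegative, $L^2$‑bounded martingale with $V_t(1)\to\widetilde W_\infty(1)$. From \eqref{def:M} and the Markov and branching properties one gets $\EE[V_t(g)\mid\cF_s]=V_s(P^\alpha_{t-s}g)$ for $0\le s\le t$, where $P^\alpha_u=e^{-u(-\Delta)^{\alpha/2}}$ has kernel $p^\alpha_u(x)=u^{-d/\alpha}p^\alpha_1(u^{-1/\alpha}x)$ with $\widehat{p^\alpha_1}(\theta)=e^{-|\theta|^\alpha}$; consequently, for each fixed $T$, $t\mapsto V_t(P^\alpha_{T-t}f)$ is an $\cF_t$‑martingale on $[0,T]$ with predictable quadratic variation $\int_0^t e^{-2\beta r}W_r\big((P^\alpha_{T-r}f)^2\big)\,dr$. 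Since $p^\alpha_1\in C^\infty$ has bounded derivatives and $\partial^k p^\alpha_1(0)=\mathrm i^{|k|}\vartheta^k_{d,\alpha}$ (vanishing for $|k|$ odd), a Taylor expansion together with \eqref{con:fxN} gives, as $u\to\infty$,
\[
P^\alpha_u f(0)=\sum_{\substack{|k|\le N\\ |k|\ \mathrm{even}}}(-1)^{|k|/2}\vartheta^k_{d,\alpha}\lambda^k_d(f)\,u^{-(d+|k|)/\alpha}+o\big(u^{-(d+N)/\alpha}\big),
\]
the right–hand sum being exactly the polynomial in \eqref{stb1}; moreover $|P^\alpha_u f(x)-P^\alpha_u f(0)|\le C\|f\|_1\,u^{-(d+a)/\alpha}|x|^a$, where $a\in(0,\alpha\wedge1)$ is a moment exponent of \eqref{con:m} (shrunk if necessary), which quantifies that $P^\alpha_u f$ varies on the spatial scale $u^{1/\alpha}$.

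For the almost sure statement I would fix $t$ and a time $s=s(t)\to\infty$ and write $V_t(f)=\EE[V_t(f)\mid\cF_s]+\big(V_t(f)-\EE[V_t(f)\mid\cF_s]\big)$. Using the quadratic‑variation formula, $\|P^\alpha_u f\|_\infty\le\min(\|f\|_\infty,\vartheta^0_{d,\alpha}u^{-d/\alpha}\|f\|_1)$, the conditional first‑moment identity, and splitting the defining integral at $(s+t)/2$, the conditional variance of the martingale remainder is $\le C_f\,e^{-\beta s}V_s(1)(t-s)^{-2d/\alpha}$, so that of $t^{(N+d)/\alpha}V_t(f)$ is $\lesssim t^{2N/\alpha}e^{-\beta s}\widetilde W_\infty(1)$, which forces the choice $s(t)=C\log t$ with $C>2N/(\alpha\beta)$. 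For the conditional‑mean term, $\EE[V_t(f)\mid\cF_s]=V_s(1)P^\alpha_{t-s}f(0)+V_s\big(x\mapsto P^\alpha_{t-s}f(x)-P^\alpha_{t-s}f(0)\big)$; inserting the semigroup expansion, and using $V_s(1)\to\widetilde W_\infty(1)$ with $L^2$ rate $e^{-\beta s/2}$ together with $(t-s)^{-(d+|k|)/\alpha}=t^{-(d+|k|)/\alpha}+O\big(st^{-(d+|k|+\alpha)/\alpha}\big)$, produces the asserted main term plus a collection of error terms. Running this along a sequence $t_n\uparrow\infty$, Chebyshev and Borel--Cantelli give \eqref{stb1} along $(t_n)$, and Doob's inequality applied to the same martingale bounds the oscillation of $t\mapsto V_t(f)$ on $[t_n,t_{n+1}]$; as $t_{n+1}/t_n\to1$ this transfers the expansion to all $t$. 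Carried out with only the leading term, this already yields $t^{d/\alpha}V_t(f)\to\widetilde W_\infty(1)\vartheta^0_{d,\alpha}\lambda^0_d(f)$ a.s.

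The hard part is controlling the conditional‑mean error for the higher‑order terms when $\alpha<2$. Then $p^\alpha_1$ has only a polynomial tail, so $V_s$ has finite moments of order $<\alpha$ only; the spatial‑regularity term is then of size $\sim(t-s)^{-(d+a)/\alpha}V_s(|x|^a)$ with $\EE[V_s(|x|^a)]\asymp 1+s^{a/\alpha}$, hence $\gtrsim(\log t)^{a/\alpha}t^{-(d+a)/\alpha}$, while replacing $(t-s)$ by $t$ costs a further $\sim(\log t)\,t^{-(d+\alpha)/\alpha}$ — both too large to be $o(t^{-(N+d)/\alpha})$ once $N\ge1$, since $a<\alpha$. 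One cannot just discard the far‑field mass of $V_s$, because a unit of mass at distance $\sim(t-s)^{1/\alpha}$ reaches a bounded region about the origin at time $t$ with probability $\sim(t-s)^{-d/\alpha}$, the same order as mass starting there. The way out should be a truncation $V_s=V_s\1_{\{|x|\le R\}}+V_s\1_{\{|x|>R\}}$ with $R=R(s,t)$ tuned to the order sought: expand $P^\alpha_{t-s}f$ to order $N$ on the near region (where the argument of $p^\alpha_1$ is genuinely small), and on the far region use the \emph{sharp} tail asymptotics $p^\alpha_u(x)\sim u|x|^{-d-\alpha}$ together with the precise estimate $\EE[W_s\1_{\{|x|>R\}}]=e^{\beta s}m\big(P^\alpha_s\1_{\{|x|>R\}}\big)$ and the hypothesis $\int|x|^a\,m(dx)<\infty$; to reach order $N$ one will presumably induct on $N$, exploiting the already‑established order‑$(N-1)$ expansion to absorb the residual $t^{-(d+\alpha)/\alpha}$‑ and $t^{-(d+a)/\alpha}$‑type corrections. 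Showing that the far‑field contribution re‑organizes exactly into the $\vartheta^k_{d,\alpha}\lambda^k_d(f)$ terms, rather than leaving spurious corrections, is where the main difficulty lies.
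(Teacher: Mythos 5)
Your setup reproduces the paper's skeleton for the parts you actually complete: the splitting of $\widetilde W_t(f)$ into $\widetilde W_s(T_{t-s}f)$ plus a martingale remainder whose variance is $O(e^{-\beta s})$ is exactly \eqref{eqn:WW} and Lemma \ref{lem:WW} (the paper takes $s=\rho(t)=\sqrt t$ rather than $C\log t$; either works), the Borel--Cantelli passage along a sequence and the oscillation control on $[t_n,t_{n+1}]$ correspond to Lemma \ref{lem:tndetermine}, and your argument does deliver the case $N=0$. But for $N\ge1$ the proposal stops at an honestly acknowledged gap, and that gap is the entire content of the theorem beyond the known first-order result: by expanding $T_{t-s}f(x)$ around the spatial point $x=0$ you are forced to control the spread $V_s(|x|^a)\sim s^{a/\alpha}$ with $a<\alpha$, which, as you correctly compute, ruins every order past the first. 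The truncation-plus-induction scheme you sketch is not carried out, and you state yourself that it is unclear how the far-field mass reorganizes into the constants $\vartheta^k_{d,\alpha}$. As it stands this is not a proof of \eqref{stb1} for $N\ge1$.

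The device you are missing is to never expand the kernel around a spatial point. The paper uses the expansion of the semigroup that is \emph{uniform in $x$} (Proposition \ref{prop:semiexpan}), $T_uf=\sum_{|k|\le N}(-1)^{|k|}\lambda^k_d(f)\,\partial^kp_u+o(u^{-(N+d)/\alpha})$ in $L^\infty$, so that the whole problem reduces to the almost sure asymptotics of $\widetilde W_{\rho(t)}(\partial^kp_{t-\rho(t)})$ for each fixed $k$ (Proposition \ref{prop:Wp}). These are computed in Fourier variables,
\begin{equation*}
\widetilde W_s(\partial^kp_u)=\frac1{(2\pi)^d}\int_{\RR^d}e^{-u|\theta|^\alpha}\,\widetilde W_s(\e_\theta)\,(i\theta)^k\,d\theta\,,
\end{equation*}
so after the rescaling $\theta\mapsto t^{-1/\alpha}\theta$ the only information needed about the spatial distribution of $W_s$ is its characteristic function at frequencies $|\theta|\lesssim t^{-1/\alpha}$. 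That is supplied by decomposing $\widetilde W_s(\e_\theta)$ through \eqref{eqn:tildeWM} and by the uniform-in-time bound $\PP_m\lt[\sup_{t\ge0}|\widetilde M_t(\e_\theta)-\widetilde M_t(1)|^2\rt]\lesssim|\theta|^{2\wedge a}+|\theta|^\alpha$ of Lemma \ref{lem:supM}, which after rescaling contributes the vanishing factor $t^{-(2\wedge a)/(2\alpha)}+t^{-1/2}$. In other words, no truncation of $W_s$ is needed because $\partial^kp_{t-\rho(t)}$ already localizes the computation to low frequencies; the moment hypothesis \eqref{con:m} enters only through the modulus of continuity of $\theta\mapsto\widetilde M_t(\e_\theta)$ at $\theta=0$.

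One caveat worth recording: your underlying worry --- that for $N\ge1$ each lower-order coefficient must be resolved to the precision $o(t^{-(N+d)/\alpha})$ of the full expansion, not merely to $o(t^{-(d+|k|)/\alpha})$ --- is substantive, and it resurfaces in the paper's own final step (``combining these limits''), where Proposition \ref{prop:Wp} as stated only provides the weaker precision; for instance the term $I_2$ in its proof genuinely contributes at order $\rho(t)\,t^{-(d+|k|+\alpha)/\alpha}$, which for $|k|=0$ is not $o(t^{-(d+N)/\alpha})$ once $N\ge\alpha/2$. So you have located the correct pressure point; the proposal, however, does not resolve it.
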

	Herein, $\cdd(-(-\Delta)^{\frac \alpha2})$ denotes the domain of the weak generator of the $\alpha$-stable process (see the following section for a precise definition).
	Theorem \ref{thm:stbSLLn} extends results of Kouritzin and Ren \cite{MR3131303} in which the first order asymptotic ($N=0$) was identified. 
	For a heuristic explanation of long-time limits of supercritical superprocesses and their connection with strong laws of large numbers, we refer to \cite{kouritzin2016laws}*{Section 2} and \cite{kl2017FV}*{Subsection 2.2}.

	The higher order asymptotic expansions \eqref{stb1} and \eqref{stb2} are obtained by combining the method initiated by Asmussen and Hering \cite{MR0420889} and an asymptotic expansion of the $\alpha$-stable semigroup (see Proposition \ref{prop:semiexpan} below).
	When $-(-\Delta)^{\frac \alpha2}$ is replaced by the generator of an Ornstein-Uhlenbeck process, similar results have been obtained by Adamczak and Mi\l o\'s \cite{MR3339862}. 
	In such case, because of the exponential rates in the expansion of the Ornstein-Uhlenbeck semigroup, convergences in distribution are expected in the asymptotic of high orders (which are called central limit theorems). 
	On the other hand, the rates in the asymptotic expansion of the $\alpha$-stable semigroup are those of polynomials (see \eqref{stb:Pexpansion} below) and are negligible under the exponential growing expected total mass $W_t(1)$, which leads to almost sure limits in the asymptotic of all high orders.
	In view of Theorem \ref{thm:stbSLLn}, it is interesting to observe that the local behavior of $W_t$ in long time is completely determined by the asymptotic of the total mass $W_t(1)$, which is a global characteristic. 

	We conclude the introduction with an outline of the article. Section \ref{sec:martingale_formuation} reviews the martingale formulations of Dawson-Watanabe processes. In Section \ref{sec:characteristic_martingales}, we investigate the long-time asymptotic of $W_t$ against some special test functions. The proof of Theorem \ref{thm:stbSLLn} is presented in Section \ref{sec:proof_of_the_main_result}.

\section{Martingale formulations} 
\label{sec:martingale_formuation}
	We use $\nu(f)$ and $\langle f, \nu \rangle$ to denote $\int_{\RR^d} f d \nu$ for a measure $\nu$ and an integrable function $f$.
	Let $T_t$ be the semigroup corresponding to a symmetric $\alpha$-stable process acting on $b\cee(\RR^d)$, the space of bounded Borel measurable functions on $\RR^d$. In particular, for every $f\in b\cee(\RR^d)$,
	\begin{equation}\label{eqn:StbPt}
		T_t f(x)=\int_{\RR^d}p_t(x-y)f(y)dy\,,
	\end{equation}
	where $p_t$ is the probability transition kernel
	\begin{equation}\label{eqn:density}
		p_t(x)=\frac{1}{(2 \pi)^d}\int_{\RR^d}e^{i x\cdot \theta}e^{-t|\theta|^\alpha} d \theta\,.
	\end{equation}
	Let $\hat f$ denote the Fourier transform of $f$ with the normalization $\hat f(\theta)=\int_{\RR^d}e^{-i \theta\cdot x}f(x)dx$. 
	Using Fourier transform, $P_t f$ takes a simpler form
	\begin{equation}\label{eqn:StbPhat}
		T_t f(x)=\frac1{(2 \pi)^d}\int_{\RR^d}e^{ix\cdot \theta-t|\theta|^\alpha}\hat f(\theta)d \theta\,.
	\end{equation}
	The weak domain of $-(-\Delta)^{\frac \alpha2}$, denoted by $\cdd(-(-\Delta)^{\frac \alpha2})$, is the collection of all functions $f$ in $b\cee(\RR^d)$ such that the limit
	\begin{equation*}
		\lim_{t\downarrow0}\frac1t(T_tf-f)
	\end{equation*}
	exists pointwise and is a bounded measurable function on $\RR^d$. If $f$ belongs to $\cdd(-(-\Delta)^{\frac \alpha2})$, we denote the above limit by $-(-\Delta)^{\frac \alpha2}f$.

	We define $\widetilde W_t=e^{-\beta t}W_t$ and $\widetilde M_t=\int_0^te^{-\beta s}dM_s$. It follows from \eqref{def:M} and It\^o formula that for every $f\in\cdd(-(-\Delta)^{\frac \alpha2}) $
	\begin{equation}\label{eqn:tildeWM}
		\widetilde W_t(f)=m(f)+\int_0^t\widetilde W_s(-(-\Delta)^{\frac \alpha2}f)ds+\widetilde M_t(f)\,.
	\end{equation}
	Note that $t\to\widetilde M_t(f)$ is a martingale with quadratic variation
	\begin{equation*}
		\langle\widetilde M(f)\rangle_t=\int_0^t e^{-2 \beta s}W_s(f^2)ds=\int_0^t e^{-\beta s}\widetilde W_s(f^2)ds\,.
	\end{equation*}
	The measure-valued process $(\widetilde M_t)_{t\ge0} $ can be considered as a worthy martingale measure (cf. \cite{MR876085}) with dominating measure $K(dx,dy,ds)=\widetilde W_s( dx\times dy)ds$. In this sense, for every deterministic function $f:\RR_+\times\RR^d\to\RR$ and $t>0$ satisfying
	\begin{equation*}
		\EE\int_0^t\int_{\RR^d}e^{-\beta s}\widetilde W_s( f^2_s)ds<\infty  \,,	
	\end{equation*}
	one can define the stochastic integration $\widetilde M_t(f):=\int_0^t\int_{\RR^d}f(s,y)d\widetilde M(s,y)$ such that $(\widetilde M_t(f))_{t\ge0} $ is a martingale with quadratic variation
	\begin{equation}\label{eqn:quadM}
		\langle \widetilde M(f)\rangle_t=\int_0^t e^{-\beta s} \widetilde W_s( f^2_s)ds\,.
	\end{equation}
	It follows that (see \cite{MR1915445}*{pg. 167}) for every $f\in b\cee(\RR^d)$
	\begin{equation}\label{X.Green}
		\widetilde W_t(f)=m(P_t f)+\int_0^t\int_{\RR^d}T_{t-r}f(y)d\widetilde M(r,y)\,,
	\end{equation}
	which is called the Green function representation. From \eqref{X.Green}, one derives the following two important identities
	\begin{equation}\label{eqn:WW}
		\widetilde W_t(f)-\widetilde W_s(T_{t-s}f)=\int_s^t\int_{\RR^d}T_{t-r}f(y)d\widetilde M(r,y)
	\end{equation}
	and
	\begin{equation}\label{eqn:1stXmoment}
		\PP_m\widetilde  W_t(f)=m(T_tf)\,,
	\end{equation}
	which are valid for all $0\le s\le t$ and $f\in b\cee(\RR^d)$.
	The following estimate is intrinsic to supercritical regimes and plays a central role in our approach.
	\begin{lemma}\label{lem:WW}
		For every $f\in b\cee(\RR^d)$ and $t\ge s\ge 0$, we have
		\begin{equation*}
			\PP_m\lt[\lt(\widetilde W_t(f)-\widetilde W_s(T_{t-s}f)\rt)^2 \rt]\lesssim m(1)\|f\|^2_{\infty}  e^{-\beta s}\,.
		\end{equation*}
	\end{lemma}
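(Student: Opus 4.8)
The plan is to combine the three facts already assembled in this section: the stochastic-integral representation \eqref{eqn:WW} of the increment, the quadratic-variation identity \eqref{eqn:quadM}, and the first-moment formula \eqref{eqn:1stXmoment}, together with the sub-Markov property of the $\alpha$-stable semigroup. Fix $t\ge s\ge0$ and set $g(r,y)=T_{t-r}f(y)$ for $r\in[s,t]$. Since $T_u$ is the transition semigroup of a (conservative) Markov process, $T_u1=1$ and $\|T_uh\|_\infty\le\|h\|_\infty$ for all $u\ge0$ and $h\in b\cee(\RR^d)$; in particular $\|g(r,\cdot)\|_\infty\le\|f\|_\infty$.

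First I would check that $g$ is an admissible integrand, i.e. that $\EE\int_0^t e^{-\beta r}\widetilde W_r(g_r^2)\,dr<\infty$. Using $g_r^2\le\|f\|_\infty^2$ pointwise and then \eqref{eqn:1stXmoment} with test function $1$ (so $\PP_m\widetilde W_r(1)=m(T_r1)=m(1)$), we get $\PP_m\int_0^t e^{-\beta r}\widetilde W_r(g_r^2)\,dr\le\|f\|_\infty^2 m(1)\int_0^\infty e^{-\beta r}\,dr=\|f\|_\infty^2 m(1)/\beta<\infty$, which legitimizes the stochastic integral $\widetilde M_t(g)-\widetilde M_s(g)=\int_s^t\int_{\RR^d}T_{t-r}f(y)\,d\widetilde M(r,y)$ appearing on the right-hand side of \eqref{eqn:WW}.

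Next, since by \eqref{eqn:WW} the increment $\widetilde W_t(f)-\widetilde W_s(T_{t-s}f)$ equals $\widetilde M_t(g)-\widetilde M_s(g)$, which is the terminal value of a martingale started at $0$ at time $s$ with quadratic variation $\langle\widetilde M(g)\rangle_r-\langle\widetilde M(g)\rangle_s$ for $r\ge s$, the martingale property gives
\begin{equation*}
	\PP_m\lt[\lt(\widetilde W_t(f)-\widetilde W_s(T_{t-s}f)\rt)^2\rt]
	=\PP_m\lt[\langle\widetilde M(g)\rangle_t-\langle\widetilde M(g)\rangle_s\rt]
	=\PP_m\int_s^t e^{-\beta r}\widetilde W_r\lt((T_{t-r}f)^2\rt)\,dr\,,
\end{equation*}
using \eqref{eqn:quadM}. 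Bounding $(T_{t-r}f)^2\le\|f\|_\infty^2$ and invoking \eqref{eqn:1stXmoment} once more yields the bound $\|f\|_\infty^2 m(1)\int_s^t e^{-\beta r}\,dr\le\beta^{-1}\|f\|_\infty^2 m(1)e^{-\beta s}$, which is the claimed estimate (the implied constant being $1/\beta$).

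There is no serious obstacle here: the statement is essentially a bookkeeping consequence of the Green-function martingale formalism set up above, and the only point requiring a moment of care is the justification that $g$ lies in the class of admissible integrands for the worthy martingale measure $\widetilde M$ — which, as noted, follows from the very same $L^\infty$ bound and the identity $\PP_m\widetilde W_r(1)=m(1)$. The essential mechanism, and the reason this is "intrinsic to supercritical regimes," is that the factor $e^{-\beta r}$ in \eqref{eqn:quadM} makes $\int_s^t e^{-\beta r}\,dr$ not merely finite but exponentially small in $s$, uniformly in $t$.
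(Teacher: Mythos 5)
Your proof is correct and follows essentially the same route as the paper: the identity from \eqref{eqn:WW}, \eqref{eqn:quadM} and \eqref{eqn:1stXmoment} reducing the second moment to $\int_s^t\langle T_r((T_{t-r}f)^2),m\rangle e^{-\beta r}\,dr$, then a uniform bound by $m(1)\|f\|_\infty^2$ and integration of $e^{-\beta r}$. The only (immaterial) difference is that you bound $(T_{t-r}f)^2\le\|f\|_\infty^2$ directly by the $L^\infty$-contraction, while the paper first applies Jensen's inequality to get $T_r(T_{t-r}f)^2\le T_t(f^2)$ before taking the sup; your added check that the integrand is admissible for the worthy martingale measure is a welcome extra.
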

	\begin{proof}
		From \eqref{eqn:WW}, \eqref{eqn:quadM} and \eqref{eqn:1stXmoment}
		\begin{align*}
			\PP_m\lt[\lt(\widetilde W_t(f)-\widetilde W_s(T_{t-s}f)\rt)^2 \rt]
			&= \PP_m\int_s^t \widetilde W_r((T_{t-r}f)^2)e^{-\beta r} dr
			\\&= \int_s^t \langle T_r(T_{t-r}f)^2,m \rangle e^{-\beta r} dr\,.
		\end{align*}
		We observe that $T_r(T_{t-r}f)^2\le T_rT_{t-r}(f^2)=T_t(f^2)$
		by Jensen's inequality and $\|T_tf^2\|_\infty\le\|f\|^2_{\infty}$.
		Hence, $\langle T_r(T_{t-r}f)^2,m \rangle\le  \|T_t(f^2)\|_\infty m(1)\le m(1)\|f\|_{\infty}^2$. It follows that
		\begin{equation*}
			\PP_m\lt[\lt(\widetilde W_t(f)-\widetilde W_s(T_{t-s}f)\rt)^2 \rt]
			\le m(1)\|f\|_{\infty}^2 \int_s^t e^{-\beta r}dr\,,
		\end{equation*}
		which yields the result.
	\end{proof}
\section{Characteristic martingales} 
\label{sec:characteristic_martingales}
	For every $x,\theta\in\RR^d$, we denote $\e_\theta(x)=e^{i \theta\cdot x}$, $\cos_\theta(x)=\cos(\theta\cdot x)$ and $\sin_\theta(x)=\sin(\theta\cdot x)$ and recall the assumption \eqref{con:m} on $m$ and the definition of $\vartheta^k_{d,\alpha}$ in \eqref{def:constTheta}. We investigate the long-time asymptotic of $\widetilde M_t(\e_\theta)$. 

\begin{lemma}\label{lem:supM} For every $\theta\in\RR^d$,
	\begin{align*}
			\PP_m\lt[\sup_{t\ge0}|\widetilde M_t(\e_\theta)-\widetilde M_t(1)|^2\rt]
			\lesssim |\theta|^{2\wedge a}+ |\theta|^\alpha\,.
	\end{align*}
\end{lemma}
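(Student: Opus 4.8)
The plan is to apply Doob's $L^2$ maximal inequality to real martingales and then to exploit that $\e_\theta$ is an eigenfunction of $T_s$. First I would write $\widetilde M_t(\e_\theta)-\widetilde M_t(1)=\widetilde M_t(\cos_\theta-1)+i\,\widetilde M_t(\sin_\theta)$. Since $\cos_\theta-1$ and $\sin_\theta$ are bounded and $\PP_m\widetilde W_s(1)=m(1)$ by \eqref{eqn:1stXmoment}, the integrability condition preceding \eqref{eqn:quadM} holds for every $t$, so both summands are well-defined martingale-measure integrals and
\[
|\widetilde M_t(\e_\theta)-\widetilde M_t(1)|^2=\widetilde M_t(\cos_\theta-1)^2+\widetilde M_t(\sin_\theta)^2.
\]
Taking $\sup_{t\ge0}$, using $\sup_t(A_t+B_t)\le\sup_tA_t+\sup_tB_t$, then $\PP_m$, and applying Doob's inequality to each of the two real $L^2$-martingales (which are $L^2$-bounded, as the next step shows) gives
\[
\PP_m\Big[\sup_{t\ge0}|\widetilde M_t(\e_\theta)-\widetilde M_t(1)|^2\Big]\le 4\sup_{t\ge0}\PP_m[\widetilde M_t(\cos_\theta-1)^2]+4\sup_{t\ge0}\PP_m[\widetilde M_t(\sin_\theta)^2].
\]

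By \eqref{eqn:quadM} and \eqref{eqn:1stXmoment}, $\PP_m[\widetilde M_t(g)^2]=\int_0^t e^{-\beta s}m(T_s(g^2))\,ds$, which is nondecreasing in $t$; hence each supremum above equals the corresponding integral over $[0,\infty)$ (in particular both are finite, bounded by $\|g\|_\infty^2 m(1)/\beta$, justifying the use of Doob above). Adding the two and using the identity $(\cos_\theta-1)^2+\sin_\theta^2=|\e_\theta-1|^2=2(1-\cos_\theta)$ turns the right-hand side into $8\int_0^\infty e^{-\beta s}\,m\big(T_s(1-\cos_\theta)\big)\,ds$. The point of recombining is that \eqref{eqn:StbPhat} (or \eqref{eqn:density}) yields $T_s\e_\theta=e^{-s|\theta|^\alpha}\e_\theta$, hence $T_s(1-\cos_\theta)=1-e^{-s|\theta|^\alpha}\cos_\theta$, and therefore
\[
m\big(T_s(1-\cos_\theta)\big)=\int_{\RR^d}\big(1-\cos(\theta\cdot x)\big)\,m(dx)+\big(1-e^{-s|\theta|^\alpha}\big)\int_{\RR^d}\cos(\theta\cdot x)\,m(dx).
\]

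It remains to estimate the two terms. For the first, the elementary inequality $1-\cos u\le 2|u|^{2\wedge a}$ (interpolating $1-\cos u\le u^2/2$ with $1-\cos u\le2$) together with $|\theta\cdot x|\le|\theta||x|$ gives $\int(1-\cos(\theta\cdot x))\,m(dx)\le 2|\theta|^{2\wedge a}\int|x|^{2\wedge a}m(dx)$, and $\int|x|^{2\wedge a}m(dx)<\infty$ by \eqref{con:m} (when $a>2$ one also uses $|x|^2\le1+|x|^a$ and finiteness of $m$). For the second, $|\cos(\theta\cdot x)|\le1$, finiteness of $m$, and $1-e^{-s|\theta|^\alpha}\le s|\theta|^\alpha$ bound it by $m(1)\,s|\theta|^\alpha$. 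Multiplying by $e^{-\beta s}$ and integrating over $s\in[0,\infty)$ (using $\int_0^\infty e^{-\beta s}ds=\beta^{-1}$, $\int_0^\infty se^{-\beta s}ds=\beta^{-2}$) produces a bound $\lesssim|\theta|^{2\wedge a}+|\theta|^\alpha$, which is the claim. I do not expect a serious obstacle: the one thing to get right is the bookkeeping — recombining the real and imaginary quadratic variations so that the exact eigenrelation $T_s\e_\theta=e^{-s|\theta|^\alpha}\e_\theta$ can be used, and then matching $1-\cos u\lesssim|u|^{2\wedge a}$ (paired with the moment assumption \eqref{con:m}) and $1-e^{-c}\le c$ to the two target terms $|\theta|^{2\wedge a}$ and $|\theta|^\alpha$.
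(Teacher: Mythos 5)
Your proof is correct and follows essentially the same route as the paper: a martingale maximal inequality applied to the real and imaginary parts, the identity $(\cos_\theta-1)^2+\sin_\theta^2=2(1-\cos_\theta)$ (the paper writes it as $4\sin^2_{\theta/2}$), the eigenrelation $T_s\e_\theta=e^{-s|\theta|^\alpha}\e_\theta$, and the split into a $|\theta|^{2\wedge a}$ term controlled by the moment assumption \eqref{con:m} and an $s|\theta|^\alpha$ term killed by $e^{-\beta s}$. The differences (Doob applied separately to each real martingale, and a slightly different algebraic regrouping of $1-e^{-s|\theta|^\alpha}\cos_\theta$) are cosmetic.
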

\begin{proof}
	For each $\theta\in\RR^d$, $(\widetilde M_t(\e_{\theta}))_{t\ge0}$
	is a complex valued martingale whose real and imaginary parts have quadratic variations satisfying
	\begin{align}\label{eqn:qudM}
	\langle\Re\widetilde M(\e_\theta)\rangle_{t} =
	 \int _{0}^{t}\widetilde W_r\left(\cos^{2}_{\theta }\right)e^{-\beta r} dr
	\enskip\textrm{and}\enskip
	\langle\Im \widetilde W(\e_\theta)\rangle_{t}
	 =  \int _{0}^{t}\widetilde W_r\left(\sin^{2}_{\theta }\right)e^{-\beta r}dr\,.
	\end{align}
	Together with martingale maximal inequality, we see that
	\begin{align*}
		\PP_m\lt[\sup_{t\ge0}|\widetilde M_t(\e_\theta)-\widetilde M_t(1)|^2\rt]
		\lesssim\PP_m\int_0^\infty \widetilde W_r\lt((\cos_\theta-1)^2+\sin_\theta^2 \rt)e^{-\beta r}dr\,.
	\end{align*}
	Hence, using the elementary identity $(\cos_\theta-1)^2+\sin_\theta^2=4\sin^2_{\theta/2}$ and \eqref{eqn:1stXmoment}, we obtain
	\begin{align*}
		\PP_m\lt[\sup_{t\ge0}|\widetilde M_t(\e_\theta)-\widetilde M_t(1)|^2\rt]
		\le\int_0^\infty \langle T_r(\sin^2_{\theta/2}),m \rangle e^{-\beta r}dr\,.
	\end{align*}
	Note that for every $x\in\RR^d$
	\begin{align*}
		2T_r\sin^2_{\theta/2}(x)&=1-\cos_{\theta}(x)e^{-r|\theta/2|^\alpha}=(1-\cos_{\theta}(x))e^{-r|\theta/2|^\alpha}+1-e^{-r|\theta/2|^\alpha}
		\\&\lesssim (1\wedge|\theta||x|)^2+r|\theta|^\alpha\,.
	\end{align*}
	These estimates and \eqref{con:m} implies the result.
\end{proof}
\begin{lemma}\label{lem:Mlim}
	$\widetilde M_t(\e_\theta) $ converges almost surely and in the mean-square sense to limit $\widetilde M_\infty(\e_\theta)$ for each $\theta\in\RR^d$. In addition, the following relation holds
	\begin{equation}\label{eqn:MW}
		\widetilde W_\infty(1)=m(1)+\widetilde M_\infty(1)\,.
	\end{equation}
\end{lemma}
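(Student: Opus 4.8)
The plan is to deduce the convergence from $L^2$-boundedness of the martingale $t\mapsto\widetilde M_t(\e_\theta)$, and then to identify its limit in the special case $\theta=0$ from the defining equation \eqref{eqn:tildeWM}.

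\emph{Step 1 (convergence).} Fix $\theta\in\RR^d$. As recorded in the proof of Lemma \ref{lem:supM}, $\Re\widetilde M_t(\e_\theta)=\widetilde M_t(\cos_\theta)$ and $\Im\widetilde M_t(\e_\theta)=\widetilde M_t(\sin_\theta)$ are real square-integrable martingales. Using \eqref{eqn:quadM}, the identity $\cos^2_\theta+\sin^2_\theta\equiv1$, \eqref{eqn:1stXmoment} and $T_s1=1$,
\[
	\PP_m\langle\widetilde M(\cos_\theta)\rangle_\infty+\PP_m\langle\widetilde M(\sin_\theta)\rangle_\infty
	=\int_0^\infty e^{-\beta s}\PP_m\widetilde W_s(1)\,ds
	=\int_0^\infty e^{-\beta s}m(1)\,ds=\frac{m(1)}{\beta}<\infty\,.
\]
Hence $\sup_{t\ge0}\PP_m|\widetilde M_t(\e_\theta)|^2\le m(1)/\beta<\infty$, so by the martingale convergence theorem for $L^2$-bounded martingales the real and imaginary parts of $\widetilde M_t(\e_\theta)$ converge almost surely and in mean square; denote the limit by $\widetilde M_\infty(\e_\theta)$. (For $\theta\ne0$ one could alternatively write $\widetilde M_t(\e_\theta)=\widetilde M_t(1)+(\widetilde M_t(\e_\theta)-\widetilde M_t(1))$ and invoke Lemma \ref{lem:supM} for the second summand, which gives even $\sup_t|\cdot|^2\in L^1$.)

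\emph{Step 2 (identification at $\theta=0$, proof of \eqref{eqn:MW}).} Since $p_t$ is a probability density, $T_t1=1$, so $1\in\cdd(-(-\Delta)^{\frac\alpha2})$ with $-(-\Delta)^{\frac\alpha2}1=0$. Taking $f=1$ in \eqref{eqn:tildeWM} yields the pathwise identity $\widetilde M_t(1)=\widetilde W_t(1)-m(1)$ for every $t\ge0$. Letting $t\to\infty$ and using the almost sure convergence $\widetilde W_t(1)=e^{-\beta t}W_t(1)\to\widetilde W_\infty(1)$ recalled in the introduction, we obtain $\widetilde M_\infty(1)=\widetilde W_\infty(1)-m(1)$, which is precisely \eqref{eqn:MW}; in particular the almost sure and mean-square limits of $\widetilde M_t(1)$ from Step 1 coincide with this random variable.

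\emph{Main point of care.} There is no substantial obstacle: the only subtlety is to make sure the abstract martingale limit $\widetilde M_\infty(1)$ produced in Step 1 genuinely agrees with the concrete random variable $\widetilde W_\infty(1)-m(1)$, which is immediate from the exact pathwise relation $\widetilde M_t(1)=\widetilde W_t(1)-m(1)$ together with the classical almost sure convergence of the rescaled total mass $e^{-\beta t}W_t(1)$. One should also note that the integrability $\EE\int_0^t e^{-\beta s}\widetilde W_s(f^2)\,ds<\infty$ needed to treat $\widetilde M_t(f)$ as a genuine square-integrable martingale holds for $f=\e_\theta$ (hence for $f=\cos_\theta,\sin_\theta,1$), again by \eqref{eqn:1stXmoment} and $T_s1=1$.
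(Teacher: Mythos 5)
Your proof is correct and follows essentially the same route as the paper: $L^2$-boundedness of the martingale via the quadratic variations of the real and imaginary parts together with \eqref{eqn:1stXmoment}, then the martingale convergence theorem, and finally the pathwise identity $\widetilde M_t(1)=\widetilde W_t(1)-m(1)$ (the paper derives it from \eqref{X.Green} with $f\equiv1$, you from \eqref{eqn:tildeWM}, which is the same identity since $T_t1=1$ and $-(-\Delta)^{\frac\alpha2}1=0$).
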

\begin{proof}
	Using \eqref{eqn:qudM} we have
	\begin{align*}
		\PP_m|\widetilde M_t(\e_\theta)|^2=\int_0^t \PP_m \widetilde W_r(1) e^{- \beta r}dr\,,
	\end{align*}
	which together with \eqref{eqn:1stXmoment} implies $\sup_{t\ge0}\PP_m|\widetilde M_t(\e_\theta)|^2<\infty$. Hence, by the martingale convergence theorem, $\lim_{t\to\infty}\widetilde M_t(\e_\theta)$ exists almost surely and in mean-square sense for each $\theta\in\RR^d$. The relation \eqref{eqn:MW} follows from here (by setting $\theta=0$) and the relation \eqref{X.Green} with $f\equiv 1$.
\end{proof}
\begin{proposition}	\label{prop:Wp}
	Let $\rho(t)=t^\kappa$ for some $\kappa\in(0,1)$.
	With $\PP_m$-probability one, we have for every $k\in\NN^d$ that
	\begin{equation}\label{stb:pk}
			\lim_{t\to\infty}t^{\frac{d+|k|}\alpha} \widetilde W_{\rho(t)}(\partial^kp_{t- \rho(t)})=
			\lt\{
			\begin{array}{ll}
				0& \quad\mbox{if } |k|\mbox{ is odd}\\
				(-1)^{\frac {|k|}2} \vartheta^k_{d,\alpha}\widetilde W_\infty(1) & \quad\mbox{if } |k|\mbox{ is even}\,.
			\end{array}
			\rt.
	\end{equation}
\end{proposition}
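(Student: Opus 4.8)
The plan is to split $\widetilde W_{\rho(t)}(\partial^k p_{t-\rho(t)})$ into a purely deterministic part, a multiple of the limiting martingale $\widetilde M_\infty(1)$, and a vanishing error. Set $c_k:=(\partial^k p_1)(0)=\tfrac1{(2\pi)^d}\int_{\RR^d}(i\theta)^k e^{-|\theta|^\alpha}\,d\theta$ (differentiating \eqref{eqn:density} under the integral sign). By \eqref{def:constTheta} this equals $i^{|k|}\vartheta^k_{d,\alpha}$, and since $\theta\mapsto\theta^k e^{-|\theta|^\alpha}$ is odd when $|k|$ is odd, $c_k=0=\vartheta^k_{d,\alpha}$ in that case while $c_k=(-1)^{|k|/2}\vartheta^k_{d,\alpha}$ when $|k|$ is even; hence $c_k\widetilde W_\infty(1)$ is exactly the right-hand side of \eqref{stb:pk}.

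First I would decompose. Applying \eqref{X.Green} to $f=\partial^k p_{t-\rho(t)}$ at time $\rho(t)$ and using $T_u p_s=p_{u+s}$ (so $T_u(\partial^k p_s)=\partial^k p_{u+s}$, whence $T_{\rho(t)}(\partial^k p_{t-\rho(t)})=\partial^k p_t$ and $T_{\rho(t)-r}(\partial^k p_{t-\rho(t)})=\partial^k p_{t-r}$),
\begin{equation*}
	\widetilde W_{\rho(t)}(\partial^k p_{t-\rho(t)})=m(\partial^k p_t)+\int_0^{\rho(t)}\!\!\int_{\RR^d}\partial^k p_{t-r}(y)\,d\widetilde M(r,y).
\end{equation*}
Multiplying by $t^{(d+|k|)/\alpha}$, setting $g_{t,r}:=t^{(d+|k|)/\alpha}\partial^k p_{t-r}-c_k$, and using $\int_0^{\rho(t)}\int 1\,d\widetilde M=\widetilde M_{\rho(t)}(1)$,
\begin{equation*}
	t^{\frac{d+|k|}\alpha}\widetilde W_{\rho(t)}(\partial^k p_{t-\rho(t)})=t^{\frac{d+|k|}\alpha}m(\partial^k p_t)+c_k\,\widetilde M_{\rho(t)}(1)+E_t,\qquad E_t:=\int_0^{\rho(t)}\!\!\int_{\RR^d}g_{t,r}(y)\,d\widetilde M(r,y).
\end{equation*}
By self-similarity $p_t(x)=t^{-d/\alpha}p_1(t^{-1/\alpha}x)$, one has $t^{(d+|k|)/\alpha}\partial^k p_t(x)=(\partial^k p_1)(t^{-1/\alpha}x)\to c_k$ pointwise, bounded by $\|\partial^k p_1\|_\infty$, so $t^{(d+|k|)/\alpha}m(\partial^k p_t)\to c_k m(1)$ by dominated convergence ($m$ being finite). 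By Lemma \ref{lem:Mlim}, $\widetilde M_u(1)\to\widetilde M_\infty(1)$ a.s., hence $c_k\widetilde M_{\rho(t)}(1)\to c_k\widetilde M_\infty(1)$ a.s. since $\rho(t)\to\infty$. As $m(1)+\widetilde M_\infty(1)=\widetilde W_\infty(1)$ by \eqref{eqn:MW}, the proposition reduces to proving $E_t\to0$ a.s.

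The heart of the argument — and the step I expect to be the main obstacle — is a quantitative bound $\PP_m|E_t|^2\lesssim t^{-c_1}$ for some $c_1=c_1(\alpha,a)>0$. By \eqref{eqn:quadM} and \eqref{eqn:1stXmoment}, $\PP_m|E_t|^2=\int_0^{\rho(t)}e^{-\beta r}\,m\bigl(T_r(g_{t,r}^2)\bigr)\,dr$, but estimating $m(T_r(g_{t,r}^2))\le m(1)\|g_{t,r}\|_\infty^2$ is worthless: $\|g_{t,r}\|_\infty$ does not tend to $0$, since $g_{t,r}(y)\to-c_k$ as $|y|\to\infty$. What must be exploited is that $g_{t,r}$ is small on the spatial scale $\rho(t)^{1/\alpha}\ll t^{1/\alpha}$ over which the measure $\widetilde W_{\rho(t)}$ is spread. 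From $g_{t,r}(y)=(\tfrac t{t-r})^{(d+|k|)/\alpha}(\partial^k p_1)((t-r)^{-1/\alpha}y)-(\partial^k p_1)(0)$ and standard estimates on $p_1$ and its derivatives (smoothness, boundedness of $\partial^k p_1$ and $\nabla\partial^k p_1$), one gets, for $r\le\rho(t)$ and $t$ large enough that $\rho(t)\le t/2$,
\begin{equation*}
	|g_{t,r}(y)|\lesssim \frac rt+\bigl(1\wedge(t-r)^{-1/\alpha}|y|\bigr).
\end{equation*}
Squaring this, substituting into $T_r(g_{t,r}^2)(x)=\int_{\RR^d} p_r(x-y)\,g_{t,r}(y)^2\,dy$, using $|y|^2\le 2|x|^2+2|x-y|^2$, and then the elementary bounds $\int_{\RR^d}(1\wedge c|x|^2)\,m(dx)\lesssim c^{(a\wedge2)/2}$ (from \eqref{con:m}) and $\int_{\RR^d}p_r(z)\,(1\wedge c|z|^2)\,dz\lesssim(c\,r^{2/\alpha})^\gamma$ for any fixed $\gamma<\tfrac\alpha2\wedge1$ — the latter being where the $\alpha$-stability genuinely enters, since $\int p_1(z)|z|^p\,dz<\infty$ only for $p<\alpha$, so the truncation $1\wedge(\cdot)$ is essential — one obtains $m(T_r(g_{t,r}^2))\lesssim(r/t)^{2\gamma/\alpha}+t^{-(a\wedge2)/\alpha}$ for $r\le\rho(t)$, and integrating against $e^{-\beta r}$ gives $\PP_m|E_t|^2\lesssim t^{-2\gamma/\alpha}+t^{-(a\wedge2)/\alpha}\le C\,t^{-c_1}$.

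It remains to upgrade this to almost-sure convergence for continuous $t$. Along $t_n:=n^p$ with $pc_1>2$ one has $\sum_n\PP_m|E_{t_n}|^2<\infty$, so Chebyshev and Borel–Cantelli give $E_{t_n}\to0$, and by the above $t_n^{(d+|k|)/\alpha}\widetilde W_{\rho(t_n)}(\partial^k p_{t_n-\rho(t_n)})\to c_k\widetilde W_\infty(1)$ a.s. For $t\in[t_n,t_{n+1}]$, \eqref{eqn:WW} applied between times $\rho(t_n)$ and $\rho(t)$ to the function $\partial^k p_{t-\rho(t)}$ (noting $T_{\rho(t)-\rho(t_n)}(\partial^k p_{t-\rho(t)})=\partial^k p_{t-\rho(t_n)}$) gives
\begin{equation*}
	\widetilde W_{\rho(t)}(\partial^k p_{t-\rho(t)})=\widetilde W_{\rho(t_n)}(\partial^k p_{t-\rho(t_n)})+\int_{\rho(t_n)}^{\rho(t)}\!\!\int_{\RR^d}\partial^k p_{t-r}(y)\,d\widetilde M(r,y).
\end{equation*}
Multiplied by the relevant power of $t$, the first term differs from its value at $t_n$ by at most a constant times $n^{-1}\widetilde W_{\rho(t_n)}(1)$, which tends to $0$ a.s. (using $\|\partial_\sigma(\sigma^{(d+|k|)/\alpha}\partial^k p_{\sigma-\rho(t_n)})\|_\infty\lesssim\sigma^{-1}$, again from the self-similar form, together with $t_{n+1}-t_n\lesssim n^{p-1}$). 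For the stochastic term, write $\partial^k p_{t-r}=\partial^k p_{t_{n+1}-r}+(\partial^k p_{t-r}-\partial^k p_{t_{n+1}-r})$: the $t$-dependent remainder contributes negligibly in $L^2$ (it carries an extra factor $e^{-\beta\rho(t_n)/2}$, using $\|\partial^k p_{t-r}-\partial^k p_{t_{n+1}-r}\|_\infty\lesssim(t_{n+1}-t_n)\,t_n^{-1-(d+|k|)/\alpha}$), whereas $u\mapsto\int_{\rho(t_n)}^u\int\partial^k p_{t_{n+1}-r}(y)\,d\widetilde M(r,y)$ is a martingale, so Doob's $L^2$ maximal inequality combined with $\|\partial^k p_{t_{n+1}-r}\|_\infty^2\lesssim t_{n+1}^{-2(d+|k|)/\alpha}$ bounds $\PP_m\bigl[\sup_{t\in[t_n,t_{n+1}]}t^{2(d+|k|)/\alpha}\bigl|\int_{\rho(t_n)}^{\rho(t)}\!\int\partial^k p_{t_{n+1}-r}(y)\,d\widetilde M(r,y)\bigr|^2\bigr]$ by a constant times $e^{-\beta\rho(t_n)}$, which is summable in $n$. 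Borel–Cantelli then yields $\sup_{t\in[t_n,t_{n+1}]}\bigl|t^{(d+|k|)/\alpha}\widetilde W_{\rho(t)}(\partial^k p_{t-\rho(t)})-t_n^{(d+|k|)/\alpha}\widetilde W_{\rho(t_n)}(\partial^k p_{t_n-\rho(t_n)})\bigr|\to0$ a.s., which with the convergence along $\{t_n\}$ establishes \eqref{stb:pk}.
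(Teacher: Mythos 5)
Your proof is correct in substance but takes a genuinely different route from the paper's. The paper proves this proposition entirely on the Fourier side: it writes $\widetilde W_{\rho(t)}(\partial^kp_{t-\rho(t)})=\frac1{(2\pi)^d}\int_{\RR^d}e^{-(t-\rho(t))|\theta|^\alpha}\widetilde W_{\rho(t)}(\e_\theta)(i\theta)^k\,d\theta$, expands $\widetilde W_{\rho(t)}(\e_\theta)$ by the martingale problem \eqref{eqn:tildeWM}, and reduces all of the error analysis to the single scalar bound $\PP_m[\sup_{u\ge0}|\widetilde M_u(\e_\theta)-\widetilde M_u(1)|^2]\lesssim|\theta|^{2\wedge a}+|\theta|^\alpha$ of Lemma \ref{lem:supM}, integrated against $e^{-c|\theta|^\alpha}|\theta|^{|k|}\,d\theta$ over dyadic-type blocks $a_n=e^n$. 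Your decomposition via the Green function representation \eqref{X.Green} is the physical-space mirror of this: $t^{(d+|k|)/\alpha}m(\partial^kp_t)$ and $c_k\widetilde M_{\rho(t)}(1)$ play the roles of the paper's $I_1$ and $I_4$, and the two ingredients in your bound on $\PP_m|E_t|^2$ --- the factor $t^{-(a\wedge2)/\alpha}$ coming from \eqref{con:m} and the factor $(r/t)^{2\gamma/\alpha}$ coming from $\int p_1(w)|w|^{2\gamma}\,dw<\infty$ for $2\gamma<\alpha$ --- correspond precisely to the two terms $|\theta|^{2\wedge a}$ and $|\theta|^\alpha$ in Lemma \ref{lem:supM}. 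Your identification of $c_k=(\partial^kp_1)(0)=i^{|k|}\vartheta^k_{d,\alpha}$ with the stated limit constants, and the self-similarity computations, are all correct. What the Fourier route buys is that the $t$-dependence sits entirely in the deterministic kernel $e^{-(t-\rho(t))|\theta|^\alpha}(i\theta)^k$, so the supremum over $t$ in a block costs nothing stochastic; what your route buys is that it bypasses Fourier analysis of the martingale measure, at the price of a more delicate interpolation step.

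That interpolation step is the one place your sketch is not airtight. To control $\sup_{t\in[t_n,t_{n+1}]}$ of $\int_{\rho(t_n)}^{\rho(t)}\int(\partial^kp_{t-r}-\partial^kp_{t_{n+1}-r})(y)\,d\widetilde M(r,y)$ you cannot invoke Doob's inequality, because the integrand itself depends on $t$ and not only the upper limit; an $L^2$ bound for each fixed $t$ does not yield the almost sure uniform bound you need for Borel--Cantelli. The repair is standard but must be said: write $\partial^kp_{t-r}-\partial^kp_{t_{n+1}-r}=-\int_t^{t_{n+1}}\partial_s\partial^kp_{s-r}\,ds$, apply a stochastic Fubini theorem for the worthy martingale measure $\widetilde M$, bound the supremum over $t$ by $\int_{t_n}^{t_{n+1}}\sup_{u\in[\rho(t_n),\rho(t_{n+1})]}\bigl|\int_{\rho(t_n)}^{u}\int\partial_s\partial^kp_{s-r}\,d\widetilde M\bigr|\,ds$, and then use Cauchy--Schwarz in $s$ together with Doob for each fixed $s$; the resulting bound still carries the factor $e^{-\beta\rho(t_n)}$ and is summable. (Alternatively, pass to the Fourier representation for this one term, which is in effect what the paper does.) With that patch your argument is complete.
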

	\begin{proof}
		It suffices to restrict on the event $\widetilde W_\infty(1)\neq 0$.
		We note that for every function $f\in L^1(\RR^d)$, by Fubini's theorem,
		\begin{equation}\label{eqn:fourierX}
			\widetilde  W_t(f)=\frac1{(2 \pi)^d}\int_{\RR^d} \widetilde W_t(\e_\theta)\hat f(\theta)d \theta\,.
		\end{equation}
		Hence,
		\begin{align*}
			\widetilde W_{\rho(t)}(\partial^kp_{t- \rho(t)})
			&=\frac1{(2 \pi)^d}\int_{\RR^d}e^{-(t- \rho(t))|\theta|^\alpha} \widetilde W_{\rho(t)}(\e_\theta)(i \theta)^{ k} d \theta\,.
		\end{align*}
		In addition, from \eqref{eqn:tildeWM}, we obtain
		\begin{equation*}
			\widetilde  W_{\rho(t)}(\e_{\theta})=m(\e_{\theta})-|\theta|^\alpha\int_0^{\rho(t)}\widetilde  W_s(\e_{\theta})ds+\widetilde M_{\rho(t)}(\e_{\theta})\,.
		\end{equation*}
		It follows that
		\begin{equation}\label{tmp:xe}
			\widetilde W_{\rho(t)}(\partial^kp_{t- \rho(t)})=I_1+I_2+I_3+I_4\,,
		\end{equation}
		where
		\begin{align*}
			I_1&=\frac1{(2 \pi)^d}\int_{\RR^d}e^{-(t- \rho(t))|\theta|^\alpha}m(\e_\theta)(i \theta)^{ k} d \theta\,,
			\\I_2&=-\frac1{(2 \pi)^d}\int_{\RR^d}e^{-(t- \rho(t))|\theta|^\alpha}\int_0^{\rho(t)}\widetilde W_s(\e_{\theta})ds |\theta|^\alpha (i \theta)^{ k} d \theta\,,
			\\I_3&=\frac1{(2 \pi)^d}\int_{\RR^d}e^{-(t- \rho(t))|\theta|^\alpha}\lt[\widetilde M_{\rho(t)}(\e_\theta)-\widetilde M_{\rho(t)}(1)\rt](i \theta)^{ k} d \theta\,,
			\\I_4&=\frac1{(2 \pi)^d}\int_{\RR^d}e^{-(t- \rho(t))|\theta|^\alpha}(i \theta)^{ k} d \theta\widetilde M_{\rho(t)}(1)\,.
		\end{align*}
		Applying Lemma \ref{lem:Mlim}, we see that
		\begin{equation}\label{tmp:I4}
			\lim_{t\to\infty}t^{\frac{d+|k|}\alpha}I_4=\frac1{(2 \pi)^d}\int_{\RR^d}e^{-|\theta|^\alpha}(i \theta)^{ k} d \theta\widetilde M_{\infty}(1)\,.
		\end{equation}
		We will show that
		\begin{align}
			\lim_{t\to\infty}t^{\frac{d+|k|}\alpha} I_1&=\frac1{(2 \pi)^d}\int_{\RR^d}e^{-|\theta|^\alpha}(i \theta)^k d \theta m(1)\,,\label{tmp:I1}
			\\\lim_{t\to\infty}t^{\frac{d+|k|}\alpha} I_2&=0
			\quad\textrm{and}\quad
			\lim_{t\to\infty}t^{\frac{d+|k|}\alpha} I_3=0\,.\label{tmp:I23}
		\end{align}
		By a change of variable, we have
		\begin{equation*}
			I_1=t^{-\frac{d+|k|}\alpha} \frac1{(2 \pi)^d}\int_{\RR^d}e^{-(1- \frac{\rho(t)}t)|\theta|^\alpha}m(\e_{t^{-1/\alpha} \theta})(i \theta)^{ k} d \theta  \,.
		\end{equation*}
		This, together with dominated convergence theorem yields \eqref{tmp:I1}. For $I_2$, we observe that
		\begin{equation*}
			\lt|\int_0^{\rho(t)}\widetilde W_s(\e_\theta)ds \rt|\le\int_0^{\rho(t)}\widetilde W_s(1)ds\lesssim \widetilde W_\infty(1) \rho(t)\,.
		\end{equation*}
		Hence,
		\begin{align*}
			|I_2|&\lesssim \widetilde W_\infty(1) \rho(t)\int_{\RR^d}e^{-(t- \rho(t))|\theta|^\alpha} |\theta|^{|k|+\alpha} d \theta
			\\&\lesssim \widetilde W_\infty(1)\frac{\rho(t)}tt^{-\frac{d+|k|}\alpha}\int_{\RR^d}e^{-(1- \frac{\rho(t)}t)|\theta|^\alpha} |\theta|^{|k|+\alpha} d \theta\,,
		\end{align*}
		which due to sublinearity of $\rho$ immediately implies the first assertion in \eqref{tmp:I23}.
		For $I_3$, putting $a_n=e^n$ and utilizing the Borel-Cantelli lemma, we merely need to show
		\begin{equation}\label{tmp:estI3}
			\sum_{n\ge1}\PP_m\lt(\sup_{a_n\le t\le a_{n+1}} t^{\frac{d+|k|}\alpha}|I_3|\rt)^2<\infty\,.
		\end{equation}
		Set $\rho_n=\rho(a_n)$ and note that
		\begin{equation*}
			\int_{\RR^d}e^{-(a_n- \rho_{n+1})|\theta|^\alpha}|\theta|^{|k|}d \theta
			\lesssim a_n^{-\frac{d+|k|}\alpha}\,.
		\end{equation*} 
		By Jensen's inequality, we have
		\begin{multline*}
			\PP_m\lt(\sup_{a_n\le t\le a_{n+1}} t^{\frac{d+|k|}\alpha}|I_3|\rt)^2
			\\\lesssim \lt(\frac{a_{n+1}^2}{a_n}\rt)^{\frac{d+|k|}\alpha} \int_{\RR^d}e^{-(a_n- \rho_{n+1})|\theta|^\alpha}\PP_m\lt[\sup_{t\ge0}|\widetilde M_t(\e_{\theta})-\widetilde M_t(1) |^2\rt]|\theta|^{|k|}d \theta.
		\end{multline*}
		Applying Lemma \ref{lem:supM}, we see that
		\begin{multline*}
			\PP_m\lt(\sup_{a_n\le t\le a_{n+1}}t^{\frac{d+|k|}\alpha}|I_3|\rt)^2
			\lesssim \lt(\frac{a_{n+1}^2}{a_n}\rt)^{\frac{d+|k|}\alpha} \int_{\RR^d}e^{-(a_n- \rho_{n+1})|\theta|^\alpha}(|\theta|^{2\wedge a}+ |\theta|^\alpha) |\theta|^{|k|}d \theta
			\\\lesssim \lt(\frac{a_{n+1}}{a_n}\rt)^{\frac{d+|k|}\alpha} \int_{\RR^d}e^{-(\frac{a_n}{a_{n+1}}- \frac{\rho_{n+1}}{a_{n+1}})|\theta|^\alpha}(a_{n+1}^{-(2\wedge a)/\alpha} |\theta|^{2\wedge a}+ a_{n+1}^{-1}|\theta|^\alpha) |\theta|^{|k|}d \theta\,.
		\end{multline*}
		Observing that $\frac{a_{n+1}}{a_n}=e$, $\lim_n(\frac{a_n}{a_{n+1}}- \frac{\rho_{n+1}}{a_{n+1}})=e^{-1}$ and $\sum_n a_n^{-\varepsilon}<\infty $ for any $\varepsilon>0$, the above estimate implies \eqref{tmp:estI3}.

		Finally, combining \eqref{tmp:xe}, \eqref{tmp:I4}, \eqref{tmp:I1} and \eqref{tmp:I23} yields
		\begin{equation*}
			\lim_{t\to\infty}t^{\frac{d+|k|}\alpha} W_{\rho(t)}(\partial^kp_{t- \rho(t)})=\frac{i^{|k|} }{(2 \pi)^d}\int_{\RR^d}e^{-|\theta|^\alpha}\theta^{ k}d \theta\lt(m(1)+\widetilde M_\infty(1) \rt) \,.
		\end{equation*}
		The equality \eqref{stb:pk} follows from the above relation and \eqref{eqn:MW}, after observing that $X_{\rho(t)}(\partial^kp_{t- \rho(t)})$ is a real number.
	\end{proof}
\section{Proof of the main result} 
\label{sec:proof_of_the_main_result}	
	We begin with an asymptotic expansion of $T_t$ as $t\to\infty$.
	If $k=(k_1,\dots,k_d)\in\NN^d$ is a multi-index and $f$ is a sufficiently smooth test function, we define $\partial^kf=\partial_{1}^{k_1}\partial_2^{k_1}\cdots\partial_d^{k_d}f$. The following semigroup expansion is proved in \cite{kl2017FV}*{Proposition 3.2}. 
	\begin{proposition}[Semigroup expansion]\label{prop:semiexpan}
		Let $f$ be a measurable function on $\RR^d$ and $N$ be a non-negative integer such that \eqref{con:fxN} holds.
		Then, we have
		\begin{align}\label{stb:Pexpansion}
			\lim_{t\to\infty}t^{\frac {N+d}\alpha}\sup_{x\in\RR^d} \lt|T_tf(x)-\sum_{k\in\NN^d:|k|\le N}\frac{(-1)^{|k|}}{k!}\int_{\RR^d}f(y)y^{k}dy\partial^{k}p_t(x)\rt|=0\,.
		\end{align}
	\end{proposition}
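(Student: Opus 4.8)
The plan is to reduce \eqref{stb:Pexpansion} to the convolution formula \eqref{eqn:StbPt} and then exploit the self-similar scaling of the stable kernel, the decisive point being to organize the Taylor remainder so that only $N$-th order derivatives of $p_1$ — and hence only the $N$-th moment of $f$ in \eqref{con:fxN} — are needed.

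First I would write $T_tf(x)=\int_{\RR^d}p_t(x-y)f(y)\,dy$ and Taylor-expand $y\mapsto p_t(x-y)$ about $y=0$ to order $N$. Since $\partial_y^k\big[p_t(x-y)\big]\big|_{y=0}=(-1)^{|k|}\partial^kp_t(x)$, the polynomial part integrates against $f$ to give \emph{exactly} the sum in \eqref{stb:Pexpansion}. The key is to use the integral form of the remainder written as an increment of $N$-th derivatives: using $N\int_0^1(1-s)^{N-1}ds=1$, for $N\ge1$ one has
\[
	p_t(x-y)-\sum_{|k|\le N}\frac{(-1)^{|k|}}{k!}\partial^kp_t(x)\,y^k
	=N(-1)^N\sum_{|k|=N}\frac{y^k}{k!}\int_0^1(1-s)^{N-1}\big[\partial^kp_t(x-sy)-\partial^kp_t(x)\big]\,ds\,.
\]
This involves derivatives of order exactly $N$ and the factor $y^k$ with $|k|=N$, which is what \eqref{con:fxN} controls; an ordinary order-$(N+1)$ remainder would instead demand the $(N+1)$-th moment, which is not assumed. (The case $N=0$ is the bare increment $p_t(x-y)-p_t(x)$ and is handled identically below.)

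Next I would insert the self-similar scaling. From \eqref{eqn:density} a change of variable gives $p_t(x)=t^{-d/\alpha}p_1(t^{-1/\alpha}x)$, hence $\partial^kp_t(x)=t^{-(d+|k|)/\alpha}(\partial^kp_1)(t^{-1/\alpha}x)$. For $|k|=N$ the prefactor is exactly $t^{-(d+N)/\alpha}$, so multiplying by $t^{(N+d)/\alpha}$ cancels it and leaves the increment $(\partial^kp_1)(t^{-1/\alpha}(x-sy))-(\partial^kp_1)(t^{-1/\alpha}x)$, whose argument-difference has norm $t^{-1/\alpha}s|y|\le t^{-1/\alpha}|y|$. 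Bounding this by the modulus of continuity $\omega_k$ of $\partial^kp_1$ (uniformly over $x\in\RR^d$ and over $s\in[0,1]$), and using $|y^k|\le|y|^N$, I obtain
\[
	t^{\frac{N+d}\alpha}\sup_{x\in\RR^d}\lt|T_tf(x)-\sum_{|k|\le N}\frac{(-1)^{|k|}}{k!}\int_{\RR^d}f(y)y^k\,dy\,\partial^kp_t(x)\rt|
	\le\sum_{|k|=N}\frac1{k!}\int_{\RR^d}|y|^N|f(y)|\,\omega_k(t^{-1/\alpha}|y|)\,dy\,.
\]

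It remains to record that $\partial^kp_1$ is bounded and uniformly continuous: differentiating \eqref{eqn:density} under the integral sign gives $\partial^kp_1(x)=(2\pi)^{-d}\int_{\RR^d}(i\theta)^ke^{ix\cdot\theta}e^{-|\theta|^\alpha}d\theta$, and since $|\theta|^{|k|}e^{-|\theta|^\alpha}\in L^1(\RR^d)$ this is a bounded function vanishing at infinity (Riemann–Lebesgue), hence uniformly continuous; in particular $\omega_k(r)\downarrow0$ as $r\downarrow0$ and $\omega_k\le2\|\partial^kp_1\|_\infty$. I would then conclude by dominated convergence: for each fixed $y$ the integrand $|y|^N|f(y)|\,\omega_k(t^{-1/\alpha}|y|)$ tends to $0$ as $t\to\infty$ and is dominated by the integrable $2\|\partial^kp_1\|_\infty|y|^N|f(y)|$ (by \eqref{con:fxN}), so the right-hand side tends to $0$, which is \eqref{stb:Pexpansion}. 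The main obstacle is exactly this moment gap: having only the $N$-th moment forbids both a higher-order remainder and a Lipschitz estimate on $\partial^kp_1$ (which would reintroduce the $(N+1)$-th moment); the remedy is to keep the remainder \emph{qualitative}, extracting the decay from uniform continuity via dominated convergence rather than from an extra power of $t$.
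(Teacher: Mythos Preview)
Your argument is correct. The paper itself does not prove this proposition: it is quoted from \cite{kl2017FV}*{Proposition 3.2}, and the surrounding remark only indicates that, in the absence of a spectral decomposition for the $\alpha$-stable semigroup, the expansion is obtained via Taylor's theorem. Your proof carries out precisely that program: write $T_tf(x)=\int p_t(x-y)f(y)\,dy$, Taylor-expand $y\mapsto p_t(x-y)$ to order $N$ using the integral remainder expressed as an \emph{increment} of the $N$-th derivatives, invoke the scaling $\partial^kp_t(x)=t^{-(d+|k|)/\alpha}(\partial^kp_1)(t^{-1/\alpha}x)$, and close with dominated convergence using only the uniform continuity of $\partial^kp_1$ and the $N$-th moment of $f$ from \eqref{con:fxN}. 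The key observation --- that writing the remainder as $N\sum_{|k|=N}\frac{y^k}{k!}\int_0^1(1-s)^{N-1}[\partial^kp_t(x-sy)-\partial^kp_t(x)]\,ds$ avoids both $(N{+}1)$-th derivatives and the $(N{+}1)$-th moment --- is exactly the point, and your justification that $\partial^kp_1\in C_0(\RR^d)$ via $|\theta|^{|k|}e^{-|\theta|^\alpha}\in L^1$ is clean. There is nothing to compare against in the present paper beyond the remark, and your proof matches the advertised method.
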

\begin{remark}
	For semigroups with discrete spectra such as the Ornstein-Uhlenbeck semigroup, similar asymptotic expansions can be obtained via spectral decompositions. Although the $\alpha$-stable semigroup does not belong to this class, such expansion can be obtained using Taylor's expansion. We refer to \cite{kl2017FV} for a proof of the above result.
\end{remark}
Set $t_n=n^\delta$ for some $\delta\in(0,1)$ sufficiently small so that
\begin{equation}\label{d.range}
	\delta\frac{N+d}\alpha+\delta<1\,.
\end{equation}
We first show that the sequence $\{t_n\}_n$ determines the long-time asymptotic of $\widetilde W_t$.
\begin{lemma}\label{lem:tndetermine}
	For every $f\in\cdd(-(-\Delta)^{\frac \alpha2})$, we have
	\begin{equation}
		\lim_n \sup_{t\in[t_n,t_{n+1}]}\lt|t^{\frac{d+N}\alpha}\widetilde W_t(f)-t_n^{\frac{d+N}\alpha}\widetilde W_{t_n}(f)\rt|=0 \quad\PP_m\textrm{-a.s.}
	\end{equation}
\end{lemma}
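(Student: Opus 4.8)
The plan is to decompose the difference by means of the martingale identity \eqref{eqn:tildeWM}. For $t\in[t_n,t_{n+1}]$ one has $\widetilde W_t(f)-\widetilde W_{t_n}(f)=\int_{t_n}^{t}\widetilde W_s(-(-\Delta)^{\frac \alpha2}f)\,ds+\bigl(\widetilde M_t(f)-\widetilde M_{t_n}(f)\bigr)$, so that
\begin{equation}\label{tmp:decomp-tn}
	t^{\frac{d+N}\alpha}\widetilde W_t(f)-t_n^{\frac{d+N}\alpha}\widetilde W_{t_n}(f)=C_n(t)+E_n(t)+A_n(t)\,,
\end{equation}
where $C_n(t)=\bigl(t^{\frac{d+N}\alpha}-t_n^{\frac{d+N}\alpha}\bigr)\widetilde W_{t_n}(f)$, $E_n(t)=t^{\frac{d+N}\alpha}\int_{t_n}^{t}\widetilde W_s(-(-\Delta)^{\frac \alpha2}f)\,ds$, and $A_n(t)=t^{\frac{d+N}\alpha}\bigl(\widetilde M_t(f)-\widetilde M_{t_n}(f)\bigr)$. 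I would bound $\sup_{t\in[t_n,t_{n+1}]}|C_n(t)|$ and $\sup_{t\in[t_n,t_{n+1}]}|E_n(t)|$ pathwise, and control $\sup_{t\in[t_n,t_{n+1}]}|A_n(t)|$ through a second moment estimate and the Borel--Cantelli lemma.

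For $C_n$ and $E_n$ the ingredients are the almost sure bound $C:=\sup_{s\ge0}\widetilde W_s(1)<\infty$ (which follows from \eqref{eqn:tildeWM} with $f\equiv1$ together with the $L^2$-boundedness of $\widetilde M(1)$ used in the proof of Lemma~\ref{lem:Mlim}), the trivial estimate $|\widetilde W_s(g)|\le\|g\|_\infty\widetilde W_s(1)$, and the boundedness of $-(-\Delta)^{\frac \alpha2}f$ (which holds since $f\in\cdd(-(-\Delta)^{\frac \alpha2})$). These give, for $t\in[t_n,t_{n+1}]$,
\begin{equation*}
	|C_n(t)|\le\bigl(t_{n+1}^{\frac{d+N}\alpha}-t_n^{\frac{d+N}\alpha}\bigr)\|f\|_\infty\,C\,,\qquad
	|E_n(t)|\le t_{n+1}^{\frac{d+N}\alpha}(t_{n+1}-t_n)\,\|(-(-\Delta)^{\frac \alpha2})f\|_\infty\,C\,.
\end{equation*}
Since $t_n=n^\delta$, concavity of $x\mapsto x^\delta$ and of $x\mapsto x^{\delta(d+N)/\alpha}$ (the latter valid because $\delta(d+N)/\alpha<1$, a consequence of \eqref{d.range} and $\delta>0$) yields $t_{n+1}^{\frac{d+N}\alpha}-t_n^{\frac{d+N}\alpha}\lesssim n^{\delta(d+N)/\alpha-1}$ and $t_{n+1}^{\frac{d+N}\alpha}(t_{n+1}-t_n)\lesssim n^{\delta(d+N)/\alpha+\delta-1}$; both exponents are negative---the second exactly by the choice \eqref{d.range} of $\delta$---so $\sup_{t\in[t_n,t_{n+1}]}\bigl(|C_n(t)|+|E_n(t)|\bigr)\to0$ $\PP_m$-almost surely.

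The key point for $A_n$ is that, in contrast with the terminal-time-dependent integrand $T_{t-r}f$ in \eqref{eqn:WW}, the process $t\mapsto\widetilde M_t(f)-\widetilde M_{t_n}(f)$ is a genuine martingale on $[t_n,\infty)$ vanishing at $t_n$. Doob's $L^2$ maximal inequality, followed by \eqref{eqn:quadM} and \eqref{eqn:1stXmoment}, then gives
\begin{equation*}
	\PP_m\Bigl[\sup_{t\in[t_n,t_{n+1}]}\bigl|\widetilde M_t(f)-\widetilde M_{t_n}(f)\bigr|^2\Bigr]\le4\int_{t_n}^{t_{n+1}}e^{-\beta s}\,m(T_sf^2)\,ds\le\frac{4\,m(1)\|f\|_\infty^2}{\beta}\,e^{-\beta t_n}\,,
\end{equation*}
so that $\PP_m\bigl[\sup_{t\in[t_n,t_{n+1}]}|A_n(t)|^2\bigr]\lesssim t_{n+1}^{2(d+N)/\alpha}e^{-\beta n^\delta}$. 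Because $\delta>0$, the stretched exponential $e^{-\beta n^\delta}$ dominates the polynomial prefactor, so this bound is summable in $n$; hence $\sum_n\sup_{t\in[t_n,t_{n+1}]}|A_n(t)|^2<\infty$ $\PP_m$-a.s., and in particular $\sup_{t\in[t_n,t_{n+1}]}|A_n(t)|\to0$ a.s. Substituting the three estimates into \eqref{tmp:decomp-tn} finishes the proof.

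I expect the only genuine obstacle to be the term $A_n$. The naive approach---comparing $\widetilde W_t(f)$ with $\widetilde W_{t_n}(T_{t-t_n}f)$ via \eqref{eqn:WW} and Lemma~\ref{lem:WW}---produces a stochastic integral whose integrand $T_{t-r}f$ depends on the upper limit $t$, so Doob's inequality cannot be applied to it directly. Passing through \eqref{eqn:tildeWM} instead replaces it by the honest martingale $\widetilde M(f)$, at the cost of the extra drift term $E_n$, which is negligible precisely because $f$ lies in the weak domain of the generator and because of the constraint \eqref{d.range} imposed on $\delta$.
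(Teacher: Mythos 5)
Your proof is correct, but it takes a genuinely different route from the paper's. You work from the weak (martingale-problem) formulation \eqref{eqn:tildeWM}, splitting the increment into a prefactor term $C_n$, a drift term $E_n$ controlled by $\|(-(-\Delta)^{\frac\alpha2})f\|_\infty$, and a genuine martingale increment $A_n$ of $\widetilde M(f)$ to which Doob's inequality applies directly. The paper instead works from the mild (Green-function) formulation: it writes the increment as $J_1+J_2+J_3$ with $J_1,J_3$ comparing $f$ to $T_{t_{n+1}-t}f$ (controlled by $\|T_sf-f\|_\infty\lesssim s$, again using domain membership) and $J_2=\sup_t\,t^{\frac{N+d}\alpha}|\int_{t_n}^t\int T_{t_{n+1}-r}f\,d\widetilde M(r,y)|$. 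Your closing remark that the integrand of \eqref{eqn:WW} depends on the upper limit is exactly the issue the paper circumvents by freezing the semigroup's terminal time at $t_{n+1}$ rather than $t$, which restores the martingale property in $t$; your route avoids the problem altogether by never introducing the semigroup. Both arguments rest on the same three ingredients --- the a.s.\ boundedness of $\sup_s\widetilde W_s(1)$, an $O(t_{n+1}-t_n)$ deterministic bound whose summability is precisely the content of \eqref{d.range}, and a Doob--plus--Borel--Cantelli step exploiting the $e^{-\beta t_n}$ decay of the martingale's second moment --- so the two proofs are of comparable difficulty; yours is arguably slightly more elementary in that it needs only the boundedness of $-(-\Delta)^{\frac\alpha2}f$ (immediate from the definition of $\cdd(-(-\Delta)^{\frac\alpha2})$) and the constant quadratic-variation formula, while the paper's reuses the Jensen-inequality computation of Lemma \ref{lem:WW}. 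All your quantitative steps check out: $\delta(d+N)/\alpha<1$ does follow from \eqref{d.range}, the exponent $\delta(d+N)/\alpha+\delta-1$ is negative for the same reason, and the stretched-exponential bound on $A_n$ is summable.
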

\begin{proof}
	We observe that
	\begin{align*}
		\sup_{t\in[t_n,t_{n+1}]} |t^{\frac{d+N}\alpha}\widetilde W_t(f)&-t_n^{\frac{d+N}\alpha}\widetilde W_{t_n}(f)|\le J_1+J_2+J_3\,,
	\end{align*}
	where
	\begin{align*}
		J_1&=\sup_{t\in[t_n,t_{n+1}]}t^{\frac{N+d}\alpha}|\widetilde W_{t}(f)-\widetilde W_t(T_{t_{n+1}-t}f)|\,,
		\\J_2&=\sup_{t\in[t_n,t_{n+1}]}t^{\frac{N+d}\alpha}|\widetilde W_t(T_{t_{n+1}-t}f)-\widetilde W_{t_n}(T_{t_{n+1}-t_n}f)|\,,
		\\J_3&=\sup_{t\in[t_n,t_{n+1}]}|t^{\frac{N+d}\alpha}\widetilde W_{t_n}(T_{t_{n+1}-t_n}f)-t_n^{\frac{N+d}\alpha}\widetilde W_{t_n}(f)|\,.
	\end{align*}
	Hence, it suffices to show $\lim_nJ_1=\lim_nJ_2=\lim_nJ_3=0$ almost surely.
	Indeed, we have
	\begin{align*}
		J_1\le \sup_{t\in[t_n,t_{n+1}]}t^{\frac{N+d}\alpha}\|T_{t_{n+1}-t}f-f\|_\infty \sup_{t\in[t_n,t_{n+1}]}\widetilde W_t(1)\,.
	\end{align*}
	Since $f\in\cdd(-(-\Delta)^{\frac \alpha2})$, we see that
	\begin{equation}\label{tmp:Tn}
		\sup_{t\in[t_n,t_{n+1}]}t^{\frac{N+d}\alpha}\|T_{t_{n+1}-t}f-f\|_\infty\lesssim t_{n+1}^{\frac{N+d}\alpha}|t_{n+1}-t_n|\,,
	\end{equation}
	which converges to 0 because of the range of $\delta$ in \eqref{d.range}.
	In addition, 
	\begin{equation}\label{tmp:linfty}
	 	\lim_n\sup_{t\in[t_n,t_{n+1}]}\widetilde W_t(1)\le\limsup_{t\to\infty}\widetilde W_t(1)= \widetilde W_\infty(1)\,.
	\end{equation} 
	Hence, $\lim_nJ_1=0$ almost surely. For $J_2$, we observe from \eqref{eqn:WW} that for every $t\in[t_n,t_{n+1}]$,
	\begin{align*}
		\widetilde W_t(T_{t_{n+1}-t}f)-\widetilde W_{t_n}(T_{t_{n+1}-t_n}f)=\int_{t_n}^t\int_{\RR^d}T_{t_{n+1}-r}f(y)d\widetilde M(r,y)\,.
	\end{align*}
	Fixing $\varepsilon>0$ and applying martingale maximal inequality as well as \eqref{eqn:quadM} and \eqref{eqn:1stXmoment}, we have
	\begin{align*}
		\PP_m(J_2>\varepsilon)&\le\PP_m\lt(t_{n+1}^{\frac{N+d}\alpha}\sup_{t\in[t_n,t_{n+1}]}\lt|\int_{t_n}^t\int_{\RR^d}T_{t_{n+1}-r}f(y)d\widetilde M(r,y) \rt|>\varepsilon \rt)
		\\&\le \varepsilon^{-2}t_{n+1}^{2\frac{N+d}\alpha}\PP_m\lt|\int_{t_n}^{t_{n+1}}\int_{\RR^d}T_{t_{n+1}-r}f(y)d\widetilde M(r,y) \rt|^2
		\\&\le \varepsilon^{-2}t_{n+1}^{2\frac{N+d}\alpha}\int_{t_n}^{t_{n+1}}\langle T_r \lt((T_{t_{n+1}-r}f)^2\rt),m \rangle e^{-\beta r}dr\,.
	\end{align*}
	As in the proof of Lemma \ref{lem:WW}, an application of Jensen's inequality gives 
	\begin{equation*}
		\langle T_r \lt((T_{t_{n+1}-r}f)^2\rt),m \rangle\le \langle T_t(f^2) ,m \rangle\le m(1)\|f\|_\infty^2\,.
	\end{equation*}
	It follows that $\PP_m(J_2>\varepsilon)\lesssim \varepsilon^{-2}t_{n+1}^{2\frac{N+d}\alpha}e^{-\beta t_n}$ and, hence $\sum_n\PP_m(J_2>\varepsilon)<\infty$.
	Applying Borel-Cantelli lemma, we find that $\lim_n J_2=0$ almost surely. $J_3$ can be treated analogously as $J_1$. Indeed, we have
	\begin{align*}
		J_3\le\sup_{t\in[t_n,t_{n+1}]}\|t^{\frac{N+d}\alpha}T_{t_{n+1}-t_n}f-t_n^{\frac{N+d}\alpha}f\|_\infty\sup_{t\in[t_n,t_{n+1}]}\widetilde W_t(1)\,.
	\end{align*}
	By triangle inequality, we see that $\sup_{t\in[t_n,t_{n+1}]}\|t^{\frac{N+d}\alpha}T_{t_{n+1}-t_n}f-t_n^{\frac{N+d}\alpha}f\|_\infty$ is at most
	\begin{align*}
		\sup_{t\in[t_n,t_{n+1}]}t^{\frac{N+d}\alpha}\|T_{t_{n+1}-t_n}f-f\|_\infty
		+(t^{\frac{N+d}\alpha}_{n+1}-t^{\frac{N+d}\alpha}_n)\|f\|_\infty\,,
	\end{align*}
	which converges to 0 by \eqref{tmp:Tn} and \eqref{d.range}. In conjunction with \eqref{tmp:linfty}, these estimates imply that $\lim_nJ_3=0$ almost surely.
\end{proof}
\begin{proof}[Proof of Theorem \ref{thm:stbSLLn}]
	We are going to obtain the limit \eqref{stb1} along the sequence $\{t_n\}_n$. We put $\rho(t)=\sqrt t$ and
	\begin{equation}
		L_tf=\sum_{k\in\NN^d:|k|\le N}(-1)^{|k|}\lambda^k_d(f)\partial^{k}p_t \quad\forall t\ge0\,.
	\end{equation} 
	We will show that
	\begin{equation}\label{tmp:convtn}
		\lim_n t_n^{\frac{N+d}\alpha}|\widetilde W_{t_n}(f)-\widetilde W_{\rho(t_n)}(L_{t_n- \rho(t_n)}f)|=0 \quad \PP_m\textrm{-a.s.}
	\end{equation}
	From Lemma \ref{lem:WW}, we see that
	\begin{equation*}
		\PP_m\lt[\lt(\widetilde W_{t_n}(f)-\widetilde W_{\rho(t_n)}(T_{t_n- \rho(t_n)}f) \rt)^2\rt]\lesssim e^{-\beta \rho(t_n)}\,.
	\end{equation*}
	This implies that
	\begin{equation*}
		\sum_n\PP_m\lt[t_n^{2\frac{N+d}\alpha} \lt(\widetilde W_{t_n}(f)-\widetilde W_{\rho(t_n)}(T_{t_n- \rho(t_n)}f) \rt)^2\rt]
		\lesssim \sum_n t_n^{2\frac {N+d} \alpha}e^{-\beta \rho(t_n)}<\infty\,.
	\end{equation*}
	An application of  Borel-Cantelli lemma yields
	\begin{equation}\label{tmp:WL}
		\lim_{n}t_n^{\frac{N+d}\alpha}|\widetilde W_{t_n}(f)-\widetilde W_{\rho(t_n)}(T_{t_n- \rho(t_n)}f)|=0 \quad\PP_m\textrm{-a.s.}
	\end{equation}
	In addition,
	\begin{equation*}
		t_n^{\frac{N+d}\alpha}|\widetilde W_{\rho(t_n)}(T_{t_n- \rho(t_n)}f)-\widetilde W_{\rho(t_n)}(L_{t_n- \rho(t_n)}f)|
		\le t_n^{\frac{N+d}\alpha}\|T_{t_n- \rho(t_n)}f-L_{t_n- \rho(t_n)}f\|_\infty \widetilde W_{\rho(t_n)}(1)\,.
	\end{equation*}
	We note that $\lim_n\widetilde W_{\rho(t_n)}(1)=\widetilde W_\infty(1)$ and $\lim_nt_n^{\frac{N+d}\alpha}\|T_{t_n- \rho(t_n)}f-L_{t_n- \rho(t_n)}f\|_\infty=0$ (by \eqref{stb:Pexpansion}). It follows that
	\begin{equation*}
		\lim_nt_n^{\frac{N+d}\alpha}|\widetilde W_{\rho(t_n)}(T_{t_n- \rho(t_n)}f)-\widetilde W_{\rho(t_n)}(L_{t_n- \rho(t_n)}f)|=0 \quad\PP_m\textrm{-a.s.} \,,
	\end{equation*}
	which together with \eqref{tmp:WL} implies \eqref{tmp:convtn}. More precisely, we have shown
	\begin{equation*}
		\lim_n t_n^{\frac{N+d}\alpha}\lt|\widetilde W_{t_n}(f)-\sum_{k\in\NN^d:|k|\le N}(-1)^{|k|}\lambda^k_d(f)\widetilde W_{\rho(t_n)}(\partial^{k}p_{t_n- \rho(t_n)}) \rt|=0 \quad\PP_m\textrm{-a.s.}
	\end{equation*}
	Applying Proposition \ref{prop:Wp}, we see that for every $k\in\NN^d$,
	\begin{equation*}
			\lim_{n}t^{\frac{d+|k|}\alpha}_n \widetilde W_{\rho(t_n)}(\partial^kp_{t_n- \rho(t_n)})=
			\lt\{
			\begin{array}{ll}
				0& \quad\mbox{if } |k|\mbox{ is odd}\\
				(-1)^{\frac {|k|}2} \vartheta^k_{d,\alpha}\widetilde W_\infty(1) & \quad\mbox{if } |k|\mbox{ is even}\,.
			\end{array}
			\rt.
	\end{equation*} 
	Combining these limits together yields
	\begin{equation*}
		\lim_{n}t^{\frac {N+d} \alpha}_n\lt|W_{t_n}(f)-\widetilde W_\infty(1)\sum_{\substack{k\in\NN^d:|k|\le N\\|k|\textrm{ is even}}} {(-1)^{\frac{|k|}2}t_n^{-\frac{d+|k|}\alpha}}\vartheta^k_{d,\alpha}\lambda^k_d(f)\rt|=0\,.
	\end{equation*}
	Applying Lemma \ref{lem:tndetermine}, we find that the above limit implies \eqref{stb1}.
\end{proof}

\bibliography{superprocesses}

\end{document}